\numberwithin{equation}{section} 
\numberwithin{figure}{section} 
  \theoremstyle{plain}
  \newtheorem*{thm*}{Theorem}
\theoremstyle{plain}
\newtheorem{thm}{Theorem}
  \theoremstyle{plain}
  \newtheorem{lem}[thm]{Lemma}
  \theoremstyle{plain}
  \newtheorem{cor}[thm]{Corollary}
\theoremstyle{definition}
\newtheorem{enavant}[thm]{}
   \theoremstyle{remark}
  \newtheorem{rem}[thm]{Remark}
\begin{document}
\author{Adrien Dubouloz}  \address{Institut de Math\'ematiques de Bourgogne, Universit\'e de Bourgogne, 9 avenue Alain Savary - BP 47870, 21078 Dijon cedex, France}  \email{Adrien.Dubouloz@u-bourgogne.fr}
\author{Lucy Moser-Jauslin} \address{Institut de Math\'ematiques de Bourgogne, Universit\'e de Bourgogne, 9 avenue Alain Savary - BP 47870, 21078 Dijon cedex, France}
\email{moser@u-bourgogne.fr}
\author{Pierre-Marie Poloni} \address{Mathematisches Institut Universität Basel Rheinsprung 21 CH-4051 Basel Switzerland}
\email{pierre-marie.poloni@unibas.ch}

\title{Non cancellation for smooth contractible affine threefolds}
\begin{abstract}
We construct two non isomorphic contractible affine threefolds $X$
and $Y$ with the property that their cylinders $X\times\mathbb{A}^{1}$
and $Y\times\mathbb{A}^{1}$ are isomorphic, showing that the generalized
Cancellation Problem has a negative answer in general for contractible
affine threefolds. We also establish that $X$ and $Y$ are actually
biholomorphic as complex analytic varieties, providing the first example
of a pair of biholomorphic but not isomorphic exotic $\mathbb{A}^{3}$'s.
\end{abstract}
\maketitle

\section*{Introduction }

The Cancellation Problem asks if a complex algebraic variety $X$
of dimension $d$ such that $X\times\mathbb{A}^{n}$ is isomorphic
to $\mathbb{A}^{n+d}$ is isomorphic to $\mathbb{A}^{d}$. This is a
difficult problem in general and, apart form the trivial case
$d=1$, an affirmative answer is known only in dimension $2$. One
can ask more generally if two algebraic varieties $X$ and $Y$ such
that $X\times\mathbb{A}^{n}$ is isomorphic to
$Y\times\mathbb{A}^{n}$ for some $n\geq1$ are isomorphic. This
more general problem has an affirmative answer for a large class
of varieties: intuitively, cancellation should hold provided that
either $X$ or $Y$ does not contain too many rational curves. A
precise characterization has been given by Iitaka and Fujita
\cite{IiF77} in terms of logarithmic Kodaira dimension, namely, if
either $X$ or $Y$, say $X$, has non negative logarithmic Kodaira
dimension $\bar{\kappa}\left(X\right)\geq0$, then every
isomorphism between $X\times\mathbb{A}^{n}$ and
$Y\times\mathbb{A}^{n}$ descends to an isomorphism between $X$ and
$Y$. This assumption on the logarithmic Kodaira dimension turns
out to be essential. Indeed, W. Danielewski \cite{Dan89} showed in
1989 that the rational affine surfaces $S_{1}=\left\{
xy=z^{2}-1\right\} $ and $S_{2}=\left\{ x^{2}y=z^{2}-1\right\} $
in $\mathbb{A}^{3}$ are non isomorphic but have isomorphic
cylinders $S_{1}\times\mathbb{A}^{1}$ and
$S_{2}\times\mathbb{A}^{1}$. Since then, Danielewski's
construction has been generalized and adapted to construct many
new counter examples of the same type, in arbitrary dimension
\cite{Dub07,Fie94,Wil98}.

However, all counter-examples constructed so far using variants of
Danielewski's idea are remote from affine spaces: for instance,
the Danielewski surfaces have nontrivial Picard groups and their
underlying euclidean topological spaces are not contractible.
Therefore, one may expect that cancellation holds for affine
varieties close to the affine space. This is actually the case for
smooth contractible or factorial surfaces. For the first ones,
this follows from an algebro-geometric characterization of
$\mathbb{A}^{2}$ due to Miyanishi-Sugie \cite{Mi75,MiS80,Su89}
which says that a smooth acyclic surface $S$ with
$\bar{\kappa}\left(S\right)=-\infty$ is isomorphic to
$\mathbb{A}^{2}$ (see also \cite{CML08} for a purely algebraic
self-contained proof). On the other hand, the fact that
generalized cancellation holds for smooth factorial affine
surfaces $S$ seems to be folklore. Roughly, the argument goes as
follows: first one may assume that $S$ has logarithmic Kodaira
dimension $\bar{\kappa}=-\infty$. By virtue of a characterization
due to T. Sugie \cite{Su79}, it follows that $S$ admits an
$\mathbb{A}^{1}$-fibration $\pi:S\rightarrow C$ over a smooth
curve $C$, that is, a surjective morphism with general fibers
isomorphic to $\mathbb{A}^{1}$. The hypothesis that the Picard
group of $S$ is trivial implies that the same holds for $C$, and
so, $C$ is a factorial affine curve. Combined with the
classification of germs of degenerate fibers of
$\mathbb{A}^{1}$-fibrations given by K.-H. Fieseler \cite{Fie94},
the factoriality of $S$ implies that $\pi:S\rightarrow C$ has no
degenerate fibers, whence is a locally trivial
$\mathbb{A}^{1}$-bundle. Since $C$ is affine and factorial,
$\pi:S\rightarrow C$ is actually a trivial $\mathbb{A}^{1}$-bundle
$S\simeq C\times\mathbb{A}^{1}\rightarrow C$ and so, the result
follows from the affirmative answer to the generalized
Cancellation Problem for curves due to Abhyankar-Eakin-Heinzer
\cite{AbEaHe72}.

The situation turns out to be very different in dimension $3$.
Indeed, recently, D. Finston and S. Maubach \cite{FM08}
constructed smooth factorial counter-examples to the generalized
Cancellation Problem. The latter arise as  total spaces of
locally trivial $\mathbb{A}^{1}$-bundles over the complement of
the isolated singularity of a Brieskorn surface
$x^{p}+y^{q}+z^{r}=0$ in $\mathbb{A}^{3}$, with $1/p+1/q+1/r<1$.
By construction, these counter-examples are not
contractible, having the homology type of a $3$-sphere, 
and so, the existence of contractible counter-examples
remained open. A famous candidate for being such a counter-example
is the Russell cubic threefold $V$ defined by the equation
$x^{2}y+z^{2}+t^{3}+x=0$ in $\mathbb{A}^{4}$. The latter is known
to be contractible but not isomorphic to $\mathbb{A}^{3}$ (see
e.g. \cite{KaZa99} and \cite{ML96}) and it is an open problem
to decide whether $V\times\mathbb{A}^{1}$ is isomorphic to
$\mathbb{A}^{4}$ or not. In this article, we show that a mild
variation on the above candidate already leads to contractible
counter-examples to the generalized Cancellation Problem in
dimension $3$. Namely we consider the smooth affine threefolds \[
X_{a}=\left\{ x^{4}y+z^{2}+t^{3}+x+x^{2}+ax^{3}=0\right\} \] in
$\mathbb{A}^{4}$, where $a$ is a complex parameter. We establish
the following result:
\begin{thm*}
The threefolds $X_{a}$ are contractible, non isomorphic to $\mathbb{A}^{3}$
and not isomorphic to each other. However, the cylinders $X_{a}\times\mathbb{A}^{1}$,
$a\in\mathbb{C}$, are all isomorphic.
\end{thm*}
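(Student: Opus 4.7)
The plan is to treat the four assertions separately and in order.

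\emph{Contractibility.} Project $\pi_{a}\colon X_{a}\to\mathbb{A}^{1}_{x}$ via the first coordinate. Over $\mathbb{A}^{1}_{x}\setminus\{0\}$ the defining equation solves uniquely for $y$, so $\pi_{a}$ restricts to an isomorphism onto $(\mathbb{A}^{1}_{x}\setminus\{0\})\times\mathbb{A}^{2}_{z,t}$; the fiber over $x=0$ is $\mathbb{A}^{1}_{y}\times\{z^{2}+t^{3}=0\}$, the product of an affine line with a cuspidal cubic, hence contractible. A Mayer--Vietoris calculation based on the analytic covering of $X_{a}(\mathbb{C})$ by the preimages of a small disk around $0$ and of $\{|x|>\varepsilon\}$ --- all three pieces having trivial reduced homology --- yields contractibility.

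\emph{Non-isomorphism with $\mathbb{A}^{3}$ and mutual non-isomorphism.} I would compute the Makar--Limanov invariant. The Kaliman--Makar--Limanov argument for the Russell cubic adapts to $X_{a}$, since only the shape $x^{n}y+z^{2}+t^{3}+p_{a}(x)$ together with $p_{a}(0)=0$ and $p_{a}'(0)\neq 0$ is used: every locally nilpotent derivation of $\mathbb{C}[X_{a}]$ annihilates $x$, so $\operatorname{ML}(X_{a})=\mathbb{C}[x]\neq\mathbb{C}=\operatorname{ML}(\mathbb{A}^{3})$. The same computation is the starting point for distinguishing the $X_{a}$ from one another: any algebraic isomorphism $\varphi\colon X_{a}\xrightarrow{\sim}X_{a'}$ must satisfy $\varphi^{\ast}x'=c\,x$ with $c\in\mathbb{C}^{\ast}$. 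The plan is then to exploit the induced map on the formal neighborhood of the divisor $\{x=0\}$, equivalently on the quotient $\mathbb{C}[X_{a}]/(x^{4})$: the coefficients of $p_{a}(x)=x+x^{2}+ax^{3}$ reappear there as isomorphism invariants, and after absorbing $c$ into a compatible rescaling of $y,z,t$ they force $a=a'$.

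\emph{Cylinder isomorphism.} I would invoke a Danielewski-type construction in the threefold setting: produce a smooth affine fourfold $\mathcal{V}$ carrying, for every pair $(a,a')$, two Zariski-locally trivial $\mathbb{A}^{1}$-bundle structures $p_{a}\colon\mathcal{V}\to X_{a}$ and $p_{a'}\colon\mathcal{V}\to X_{a'}$. Since $X_{a}$ is smooth and affine one has $H^{1}(X_{a},\mathcal{O})=0$, so every such bundle is trivial and $\mathcal{V}\simeq X_{a}\times\mathbb{A}^{1}\simeq X_{a'}\times\mathbb{A}^{1}$. A natural candidate is a hypersurface of the form $\{x^{4}Y+z^{2}+t^{3}+x+x^{2}+R(x,u,Y)=0\}\subset\mathbb{A}^{5}_{x,Y,z,t,u}$, with $R$ designed to interpolate between the contributions $ax^{3}$ and $a'x^{3}$ via corrections divisible by $x^{4}$ (available only after adding the variable $u$): the projections forgetting $Y$, respectively $u$, should then realize the two bundle structures, identifying the two trivializations.

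\emph{Main obstacle.} I expect the rigidity step --- showing $X_{a}\not\simeq X_{a'}$ when $a\neq a'$ --- to be the hard part. The Makar--Limanov invariant alone pins down only the subring $\mathbb{C}[x]$ and so does not directly detect $a$. A finer invariant is needed: either a higher-order Makar--Limanov stratification, or an intrinsic description of a suitable $\mathbb{G}_{a}$-quotient of $X_{a}$ whose isomorphism class encodes the parameter $a$. Crucially this invariant must be preserved by algebraic but not by biholomorphic isomorphisms, since the abstract also announces that the $X_{a}$ are all biholomorphic as complex-analytic varieties; that dividing line is precisely what separates the algebraic and analytic conclusions of the theorem.
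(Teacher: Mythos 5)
Your proposal has the right overall shape, but both load-bearing steps are left open, and the specific invariants you propose for the hard step would not work as stated. For mutual non-isomorphism, the abstract quotient $\mathbb{C}[X_{a}]/(x^{4})$ (equivalently, the formal neighbourhood of $\{x=0\}$ viewed as an abstract deformation) does \emph{not} detect $a$: the paper remarks explicitly that the centers $Z_{n,p}$ with ideals $\left(x^{n},z^{2}+t^{3}+xp(x)\right)$ are all isomorphic as abstract infinitesimal deformations of the cuspidal cubic over $\mathbb{C}[x]/(x^{n})$ (e.g.\ via $(x,z,t)\mapsto(x,g_{1}(x)z,g_{2}(x)t)$ with $g_{1}^{2}\equiv g_{2}^{3}\equiv p \bmod x^{n}$). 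What actually rigidifies the situation is the \emph{Derksen} invariant ${\rm Dk}(X_{a})=\mathbb{C}[x,z,t]$, computed alongside ${\rm ML}(X_{a})=\mathbb{C}[x]$ by the Kaliman--Makar-Limanov method: it forces any isomorphism $X_{a}\to X_{a'}$ to descend to an automorphism of $\mathbb{A}^{3}$ preserving $\{x=0\}$ and carrying one modification center to the other, so the invariant is the \emph{embedding class} of $Z_{4,p}$ in $\mathbb{A}^{3}$, not any intrinsic residue of $p_{a}$. Even then one still needs the paper's key technical lemma: a $\mathbb{C}[x]$-automorphism of $\mathbb{C}[x][z,t]$ congruent to the identity mod $x$ carrying $\left(x^{n},z^{2}+t^{3}+xp_{1}\right)$ to $\left(x^{n},z^{2}+t^{3}+xp_{2}\right)$ forces $p_{1}=p_{2}$; this is proved by an induction that peels off Jacobian-derivation exponentials order by order in $x$. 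None of this is present in your sketch, and you correctly flag it as the main obstacle.

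For the cylinders, the paper's actual proof is more elementary than your fiber-product plan: after adding one variable $w$, the unit $p(x)$ can be absorbed by a matrix in ${\rm GL}_{3}\left(\mathbb{C}[x]\right)$ built from polynomial truncations $g_{1},g_{2}$ of $\exp\left(\tfrac12 xf\right)$ and $\exp\left(\tfrac13 xf\right)$ (where $e^{xf}\equiv p \bmod x^{4}$), carrying the ideal $\left(x^{4},z^{2}+t^{3}+xp(x)\right)$ of $\mathbb{C}[x][z,t,w]$ onto $\left(x^{4},z^{2}+t^{3}+x\right)$; the universal property of affine modifications then lifts this to an isomorphism of cylinders. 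Your Danielewski-type construction corresponds to the paper's second section, but beware: the two $\mathbb{G}_{a}$-bundle structures naturally live over the strictly quasi-affine opens $X^{*}=X\setminus\{x=t=0\}$ (the quotient is only an algebraic space, obtained by doubling the line $\{x=0\}$ \'etale-locally), so your appeal to $H^{1}(X_{a},\mathcal{O})=0$ does not apply directly; triviality must be checked by an explicit \v{C}ech cocycle computation and the isomorphism then extended across a codimension-two locus. Finally, your Mayer--Vietoris argument at best gives acyclicity; contractibility is obtained in the paper by citing Kaliman--Zaidenberg's theorem on affine modifications, which also handles simple connectedness.
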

Recall that by virtue of a characterization due to A. Dimca \cite{Di90},
the varieties $X_{a}$, $a\in\mathbb{C}$, are all diffeomorphic to
$\mathbb{R}^{6}$ when equipped with the euclidean topology, whence
give examples of non isomorphic exotic affine spaces. We show in contrast
they are all biholomorphic when considered as complex analytic manifolds,
thus answering an open problem raised by M. Zaidenberg \cite{Zai05}.
\\

The article is organized as follows. In the first section, we
consider more general contractible affine threefolds $X_{n,p}$ in
$\mathbb{A}^{4}$ defined by equations of the form
$x^{n}y+z^{2}+t^{3}+xp\left(x\right)=0$, where $n\geq2$ and
$p\left(x\right)\in\mathbb{C}\left[x\right]$. We provide, for each
fixed integer $n\geq2$, a complete classification of isomorphism
classes of such varieties and their cylinders. As a corollary, we
obtain that the varieties $X_{a}$, $a\in\mathbb{C}$, are pairwise
non isomorphic and have isomorphic cylinders. The second section
is devoted to a geometric interpretation of the existence of an
isomorphism between the cylinders $X_{a}\times\mathbb{A}^{1}$,
$a\in\mathbb{C}$, in terms of a Danielewski fiber product trick
construction.

\section{Main results }

For any integer $n\geq2$ and any polynomial
$q\in\mathbb{C}\left[x,z,t\right]$, we consider the affine
threefold $V_{n,q}$ in $\mathbb{A}^{4}={\rm
Spec}\left(\mathbb{C}\left[x,y,z,t\right]\right)$ defined by the
equation \[ x^{n}y+z^{2}+t^{3}+xq\left(x,z,t\right)=0.\] Note that
$V_{n,q}$ is smooth if and only if $q\left(0,0,0\right)$ is a
nonzero constant. The morphism $\pi={\rm
pr}_{x}:V_{n,q}\rightarrow\mathbb{A}^{1}$ is a flat
$\mathbb{A}^{2}$-fibration restricting to a trivial
$\mathbb{A}^{2}$-bundle over $\mathbb{A}^{1}\setminus\left\{
0\right\} $ and with degenerate fiber $\pi^{-1}\left(0\right)$
isomorphic to the cylinder $\Gamma_{2,3}\times\mathbb{A}^{1}$ over
the plane cuspidal curve $\Gamma_{2,3}=\left\{
z^{2}+t^{3}=0\right\} $. This implies in particular that $V_{n,q}$
is factorial. Moreover, via the natural localization homomorphism,
one may identify the coordinate ring of $V_{n,q}$ with the
sub-algebra
$\mathbb{C}\left[x,z,t,x^{-n}\left(z^{2}+t^{3}+xq\left(x,y,t\right)\right)\right]$
of $\mathbb{C}\left[x^{\pm1},z,t\right]$. This says equivalently
that $V_{n,q}$ is the \emph{affine modification}
$\sigma_{n,q}={\rm pr}_{x,z,t}:V_{n,q}\rightarrow\mathbb{A}^{3}$
of $\mathbb{A}^{3}={\rm
Spec}\left(\mathbb{C}\left[x,z,t\right]\right)$ with center at the
closed subscheme $Z_{n,q}$ with defining ideal
$I_{n,q}=\left(x^{n},z^{2}+t^{3}+xq\left(x,y,t\right)\right)$ and
divisor $D=\left\{ x^{n}=0\right\} $ in the sense of
\cite{KaZa99}, that is, $V_{n,q}$ is isomorphic to the complement
of the proper transform of $D$ in the blow-up of $\mathbb{A}^{3}$
with center at $Z_{n,q}$. It follows in particular from Theorem
3.1 in \cite{KaZa99} that a smooth $V_{n,q}$ is contractible when
considered as a complex manifold. As a consequence of the general
methods developed in \cite{K-ML07}, we have the following useful
criterion to decide which threefolds $V_{n,q}$ are isomorphic.
\begin{lem}
\label{lem:Iso} Every isomorphism $\Phi:V_{n_{1},q_{1}}\stackrel{\sim}{\longrightarrow}V_{n_{2},q_{2}}$
is the lift via $\sigma_{n_{1},q_{1}}$ and $\sigma_{n_{2},q_{2}}$
of an automorphism of $\mathbb{A}^{3}$ which maps the locus of the
modification $\sigma_{n_{1},q_{1}}$ isomorphically onto the one of
the modification $\sigma_{n_{2},q_{2}}$. Equivalently, there exists
a commutative diagram \[\xymatrix{ V_{n_1,q_1} \ar[d]_{\sigma_{n_1,q_1}} \ar[r]^{\Phi} & V_{n_2,q_2} \ar[d]^{\sigma_{n_2,q_2}} \\ \mathbb{A}^3 \ar[r]^{\varphi} & \mathbb{A}^3 }\]
where $\varphi$ is an automorphism of $\mathbb{A}^{3}$ which preserves
the hyperplane $\left\{ x=0\right\} $ and maps $Z_{n_{1},q_{1}}$
isomorphically onto $Z_{n_{2},q_{2}}$. \end{lem}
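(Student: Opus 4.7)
The strategy is to apply the Makar--Limanov/Derksen-invariant analysis of \cite{K-ML07} to show that any isomorphism $\Phi:V_{n_1,q_1}\stackrel{\sim}{\longrightarrow}V_{n_2,q_2}$ carries the subring $\mathbb{C}[x_2,z_2,t_2]\subset A_{n_2,q_2}$ bijectively onto $\mathbb{C}[x_1,z_1,t_1]\subset A_{n_1,q_1}$. Here both subrings are viewed via the embeddings $A_{n_i,q_i}=\mathbb{C}[x_i,z_i,t_i,x_i^{-n_i}(z_i^2+t_i^3+x_iq_i)]\subset\mathbb{C}[x_i^{\pm1},z_i,t_i]$ coming from the modification $\sigma_{n_i,q_i}$. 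Once this is established, the commutative square is tautological: $\varphi:\mathbb{A}^3\to\mathbb{A}^3$ is dual to the induced restriction of $\Phi^{*}$ to the two subrings.

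The first concrete step is to produce an explicit family of locally nilpotent derivations (LNDs) on $A_{n,q}$. The $\mathbb{G}_a$-action $z\mapsto z+sx^n$ (with $x,t$ fixed and $y$ adjusted to preserve the defining equation $x^ny+z^2+t^3+xq=0$) yields an LND $\Delta_z$ of $A_{n,q}$ satisfying $\Delta_z(x)=\Delta_z(t)=0$, $\Delta_z(z)=x^n$ and $\Delta_z(y)=-2z-x\partial_z q$; swapping the roles of $z$ and $t$ produces an analogous LND $\Delta_t$ with $\Delta_t(x)=\Delta_t(z)=0$, $\Delta_t(t)=x^n$ and $\Delta_t(y)=-3t^2-x\partial_t q$. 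Their kernels contain $\mathbb{C}[x,t]$ and $\mathbb{C}[x,z]$ respectively, so the Derksen invariant $\mathrm{D}(A_{n,q})$, i.e.\ the subalgebra generated by the kernels of all LNDs, already contains $\mathbb{C}[x,z,t]$. The crux is to prove the reverse inclusion $\mathrm{D}(A_{n,q})\subset\mathbb{C}[x,z,t]$, together with the sharper identification $\mathrm{ML}(A_{n,q})=\mathbb{C}[x]$. This is a direct adaptation of the corresponding result for the Russell cubic established in \cite{K-ML07}: one equips $A_{n,q}$ with a weight filtration whose associated graded is the quasi-homogeneous hypersurface $\{x^ny+z^2+t^3=0\}\subset\mathbb{A}^4$, classifies the homogeneous LNDs on this explicit model, and then transports the conclusions back to $A_{n,q}$ via standard leading-form degree estimates. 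This is where I expect the main technical work to sit.

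Granting these two identities, any isomorphism $\Phi^{*}:A_{n_2,q_2}\to A_{n_1,q_1}$ must respect both invariants, hence restricts to an isomorphism of polynomial rings $\mathbb{C}[x_2,z_2,t_2]\stackrel{\sim}{\longrightarrow}\mathbb{C}[x_1,z_1,t_1]$ and to an isomorphism $\mathbb{C}[x_2]\stackrel{\sim}{\longrightarrow}\mathbb{C}[x_1]$. The former gives the automorphism $\varphi$ fitting into the commutative diagram. For the latter, $\Phi^{*}(x_2)=\alpha x_1+\beta$ with $\alpha\in\mathbb{C}^{*}$; the constant $\beta$ vanishes because the hyperplane $\{x=0\}$ is intrinsically distinguished as the unique fiber of the $\mathbb{A}^2$-fibration $\pi$ that fails to be isomorphic to $\mathbb{A}^2$, and $\Phi$ necessarily maps this bad fiber to its counterpart. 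Thus $\varphi$ preserves $\{x=0\}$. Finally, the equality $\varphi(Z_{n_1,q_1})=Z_{n_2,q_2}$ as closed subschemes of $\mathbb{A}^3$ is forced by the commutativity of the diagram together with the characterization of $Z_{n_i,q_i}$ as the non-isomorphism locus of the modification $\sigma_{n_i,q_i}$, which yields $\varphi^{*}(I_{n_2,q_2})=I_{n_1,q_1}$ after inverting the common pullback $(x_i^{n_i})$ on each side.
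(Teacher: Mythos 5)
Your proposal is correct and follows essentially the same route as the paper: both hinge on the identities $\mathrm{ML}(V_{n,q})=\mathbb{C}[x]$ and $\mathrm{Dk}(V_{n,q})=\mathbb{C}[x,z,t]$ obtained by adapting the Russell-cubic arguments of \cite{K-ML07}, then pin down $\Phi^{*}(x_2)=\alpha x_1$ via the distinguished fiber over the origin (the paper notes it is the unique singular fiber, you note it is the unique non-$\mathbb{A}^2$ fiber -- same point), and finally recover $\varphi^{*}(I_{n_2,q_2})=I_{n_1,q_1}$ from the intrinsic description $I_{n,q}=x^{n}\Gamma(V_{n,q},\mathcal{O}_{V_{n,q}})\cap\mathbb{C}[x,z,t]$, which is the precise form of your somewhat looser ``non-isomorphism locus'' remark.
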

\begin{proof}
The fact that every automorphism of $\mathbb{A}^{3}$ satisfying
the above property lifts to an isomorphism between
$V_{n_{1},q_{1}}$ and $V_{n_{2},q_{2}}$ is an immediate
consequence of the universal property of affine modifications, Proposition 2.1 in
\cite{KaZa99}. For the converse we exploit two invariants of an
affine variety $V$: the Makar-Limanov invariant (resp. the Derksen
invariant) of $V$ which is the sub-algebra ${\rm
ML}\left(V\right)$ (resp. ${\rm Dk}\left(V\right)$) of
$\Gamma\left(V,\mathcal{O}_{V}\right)$ generated by regular
functions invariant under all (resp. at least one) non trivial
algebraic $\mathbb{G}_{a}$-actions on $V$ (see e.g. \cite{Zai99}).
The same arguments as the ones used to treat the case of the
Russell cubic threefold $V_{2,1}$ in \cite{K-ML07} show more
generally that ${\rm
ML}\left(V_{n,q}\right)=\mathbb{C}\left[x\right]$ and ${\rm
Dk}\left(V_{n,q}\right)=\mathbb{C}\left[x,z,t\right]$ for every
$q\in\mathbb{C}\left[x,z,t\right]$. This implies that any
isomorphism between the coordinate rings of $V_{n_{1},q_{1}}$ and
$V_{n_{2},q_{2}}$ restricts to an automorphism $\varphi^{*}$ of
$\mathbb{C}\left[x,z,t\right]$ inducing a one $x\mapsto ax+b$ of
$\mathbb{C}\left[x\right]$, where $a\in\mathbb{C}^{*}$ and
$b\in\mathbb{C}$. Actually, $b=0$ as the zero set of $ax+b$ in
$V_{n_{i},q_{i}}$,$i=1,2$, is singular if and only if $b=0$. So
$\varphi^{*}$ stabilizes the ideal $\left(x\right)$. In turn, the
fact that 
$I_{n_{i},q_{i}}=x^{n}\Gamma\left(V_{n_{i},q_{i}},\mathcal{O}_{V_{n_{i},q_{i}}}\right)\cap\mathbb{C}\left[x,z,t\right]$
implies that
$\varphi^{*}\left(I_{n_{2},q_{2}}\right)=I_{n_{1},q_{1}}$. Now the
assertion follows since the modification morphism
$\sigma_{n,q}:V_{n,q}\rightarrow\mathbb{A}^{3}$ defined above is
precisely induced by the natural inclusion of ${\rm
Dk}\left(V_{n,q}\right)$ into 
$\Gamma\left(V_{n,q},\mathcal{O}_{V_{n,q}}\right)$.
\end{proof}
\begin{enavant} From now on, we only consider a very special case
of smooth contractible threefolds $V_{n,q}$, namely, the ones $V_{n,p}$
defined by equations \[
x^{n}y+z^{2}+t^{3}+xp\left(x\right)=0,\]
where $p\in\mathbb{C}\left[x\right]$ is a polynomial such that $p\left(0\right)\neq0$.
We have the following result.
\end{enavant}

\begin{thm}\label{main-thm}
For a fixed integer $n\geq2$, the following hold:
\begin{enumerate}
\item The algebraic varieties $V_{n,p_{1}}$ and $V_{n,p_{2}}$ are
isomorphic if and only if there exists
$\lambda,\varepsilon\in\mathbb{C}^{*}$ such that
$p_{2}\left(x\right)\equiv\varepsilon p_{1}\left(\lambda
x\right)\;\textrm{mod }x^{n-1}$.

\item The cylinders $V_{n,p}\times\mathbb{A}^{1}$ are all isomorphic.

\item The varieties $V_{n,p}$ are all isomorphic as complex analytic
manifolds.\end{enumerate} \end{thm}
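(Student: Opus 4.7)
The plan for part (1) is to apply Lemma~\ref{lem:Iso}, which reduces classifying isomorphisms $V_{n,p_1}\to V_{n,p_2}$ to classifying automorphisms $\varphi\in\mathrm{Aut}(\mathbb{A}^3)$ with $\varphi^*(x)=\lambda x$, $\lambda\in\mathbb{C}^*$, and $\varphi^*(I_{n,p_2})=I_{n,p_1}$. Restricting $\varphi$ modulo $x$ gives an automorphism of $\mathbb{A}^2={\rm Spec}(\mathbb{C}[z,t])$ preserving the cuspidal ideal $(z^2+t^3)$; via the normalization $\mathbb{A}^1\to\Gamma_{2,3}$, $s\mapsto(s^3,-s^2)$, any such automorphism lifts to one of $\mathbb{A}^1$ fixing the origin (the unique preimage of the singular point), hence has the form $(z,t)\mapsto(\mu^3 z,\mu^2 t)$ for some $\mu\in\mathbb{C}^*$. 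Writing $\varphi^*(z)=\mu^3 z+xA(x,z,t)$, $\varphi^*(t)=\mu^2 t+xB(x,z,t)$ and expanding $\varphi^*(z^2+t^3+xp_2(x))$, a direct computation of the image modulo $I_{n,p_1}$---subtracting $\mu^6\cdot(z^2+t^3+xp_1(x))$ and reducing modulo $x^n$---forces the congruence $\lambda p_2(\lambda x)\equiv\mu^6 p_1(x)\pmod{x^{n-1}}$. Substituting $x\mapsto x/\lambda$ yields the form stated in the theorem with $\varepsilon=\mu^6/\lambda$. Conversely, given such $\lambda,\varepsilon$, one picks $\mu$ with $\mu^6=\lambda\varepsilon$, builds an explicit $\varphi$ with correction terms $A,B$ chosen to match higher-order discrepancies, and lifts via Lemma~\ref{lem:Iso}.

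For part (2), the plan is to produce an explicit algebraic isomorphism between the two cylinders. Writing $V_{n,p_i}\times\mathbb{A}^1_u$ as the affine modification of $\mathbb{A}^4={\rm Spec}(\mathbb{C}[x,z,t,u])$ along the ideal $(x^n,z^2+t^3+xp_i(x))\cdot\mathbb{C}[x,z,t,u]$ with divisor $\{x^n=0\}$, a four-variable analogue of Lemma~\ref{lem:Iso} reduces matters to finding an automorphism of $\mathbb{A}^4$ preserving $\{x=0\}$ that carries the first ideal onto the second. The crucial point, absent in three variables, is that a $u$-dependent shearing of the form $(x,z,t,u)\mapsto(x,\, z+x\alpha(x,u),\, t+x\beta(x,u),\, u+\gamma(x,z,t))$ produces cross-terms upon squaring and cubing that can be arranged, with $\alpha,\beta$ suitably chosen polynomials in $x$ and $u$, to absorb precisely the polynomial correction $x(p_1(x)-p_2(x))$ modulo $x^n$---an absorption that was obstructed in $\mathbb{A}^3$. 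This construction is the algebraic manifestation of the Danielewski fiber-product trick to be presented geometrically in Section~2.

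For part (3), which is the most delicate, the plan is to exhibit the family of smooth threefolds $\{V_{n,p_\tau}\}_{\tau\in\mathbb{C}}$, with $p_\tau=(1-\tau)p_1+\tau p_2$---reduced via part~(1) to the case $p_1(0)=p_2(0)=c\neq0$, so that $p_\tau(0)=c$ for all $\tau$---as a holomorphically trivial family. Setting $\mathcal{V}=\{x^n y+z^2+t^3+xp_\tau(x)=0\}\subset\mathbb{A}^4\times\mathbb{A}^1_\tau$, one seeks a complete holomorphic vector field $\xi$ on $\mathcal{V}$ lifting $\partial_\tau$. Taking $\xi=\partial_\tau+\beta\partial_y+\gamma\partial_z+\delta\partial_t$, the tangency equation $\xi(F_\tau)=0$ reduces to expressing $x\cdot\partial_\tau p_\tau(x)$ as an element of the analytic ideal $(x^n,2z,3t^2)\cdot\mathcal{O}^{\rm hol}_{\mathcal{V}}$; algebraically this is obstructed (which is precisely what produces the non-isomorphism in part~(1)), but analytically it can be solved by exploiting the relation $xp_\tau(x)=-(x^n y+z^2+t^3)$ together with the fact that $p_\tau(x)$ is a nowhere-zero holomorphic function in a Stein neighborhood of the central locus $\{x=0\}\subset\mathcal{V}$, patched across $\mathcal{V}$ by the Oka principle using that each $V_{n,p_\tau}$ is Stein contractible. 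The main obstacle is ensuring \emph{completeness} of the flow of $\xi$ for all $\tau\in[0,1]$: one must arrange that $\xi$ respects the $\mathbb{A}^2$-fibration $V_{n,p_\tau}\to\mathbb{A}^1_x$ and has controlled growth both on the general fibers $\cong\mathbb{A}^2$ (where affine-linear-in-fiber vector fields are automatically complete) and near the degenerate fiber $\Gamma_{2,3}\times\mathbb{A}^1_y$ over $x=0$, whose explicit product structure permits a direct control of the flow.
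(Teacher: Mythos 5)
Your reduction of part (1) via Lemma \ref{lem:Iso} and the normalization $\phi^*z\equiv\mu^3z$, $\phi^*t\equiv\mu^2t \bmod x$ matches the paper, but the sentence ``a direct computation of the image modulo $I_{n,p_1}$ \ldots forces the congruence'' hides the entire difficulty of the forward direction. After normalizing, what remains is to show that \emph{no} $\mathbb{C}[x]$-automorphism of $\mathbb{C}[x][z,t]$ congruent to the identity mod $x$ can carry $(x^n,r+xp_2)$ onto $(x^n,r+\varepsilon\lambda^{-1} xp_1(\lambda x))$ unless the two polynomials already agree mod $x^{n-1}$; the correction terms $A,B$ are \emph{not} forced to vanish, and the computation is not direct. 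The paper isolates this as Lemma \ref{lem:Equ-Crit}: writing the leading correction at order $x^{n_0}$, unimodularity of the Jacobian forces it to be of the form $\mathrm{Jac}(r,h)$, which lies in $(z,t)\mathbb{C}[z,t]$ and therefore cannot contribute the constant term needed to alter the coefficient $a_{n_0-1}$ of $p$; one then shows $h\equiv\gamma r+c$, kills the correction by composing with a polynomial approximation (via van den Essen--Maubach--V\'en\'ereau) of the exponential of the Jacobian derivation $x^{n_0}\mathrm{Jac}(\cdot,\gamma(r+xp_1))$, and inducts on $n_0$. Without an argument of this type your proof of the ``only if'' half of (1) is incomplete.

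Parts (2) and (3) have more serious problems. For (2), the additive shearing $(z,t)\mapsto(z+x\alpha(x,u),\,t+x\beta(x,u))$ cannot work: it produces the terms $2x\alpha(x,u)z$ and $3x\beta(x,u)t^2$, which depend on $z$ and $t$ and cannot combine with a correction $x(p_1(x)-p_2(x))$ depending on $x$ alone to land in the target ideal, unless $\alpha,\beta\equiv0\bmod x^{n-1}$. The mechanism that actually works is multiplicative: after reducing to comparing $V_{n,p}$ with $V_{n,1}\simeq W_{n,p}=\{x^ny+p(x)(z^2+t^3+x)=0\}$ (using $(x^n,r+x)=(x^n,p(x)(r+x))$), one chooses relatively prime $g_1,g_2\in\mathbb{C}[x]$ with $g_1^2\equiv g_2^3\equiv p \bmod x^n$ and uses the extra cylinder variable $w$ to complete the non-invertible scaling $\mathrm{diag}(g_1,g_2)$, corrected by $x^n w$, to a matrix in $\mathrm{GL}_3(\mathbb{C}[x])$ acting on $(z,t,w)$; this sends $(x^n,r+xp(x))$ to $(x^n,p(x)(r+x))$ and lifts through the affine modifications. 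This is exactly where the extra factor $\mathbb{A}^1$ is used, and your sketch does not supply a working substitute. For (3), your flow/Oka argument leaves its own ``main obstacle'' (completeness of $\xi$) unresolved, and it is unnecessary: the same square and cube roots exist globally as entire functions $\exp(\frac12xf)$, $\exp(\frac13xf)$ with $\exp(xf(x))\equiv p(x)\bmod x^n$, yielding an explicit biholomorphism of $\mathbb{A}^4$ carrying $W_{n,p}$ onto $V_{n,p}$, with the discrepancy $(\exp(xf)-p)/x^n$ absorbed into the $y$-coordinate. As written, neither (2) nor (3) is proved.
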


\begin{proof}
Letting $r=z^{2}+t^{3}$, it follows from Lemma \ref{lem:Iso} above
that $V_{n,p_{2}}\simeq V_{n,p_{1}}$ if and only if there exists
an automorphism $\phi$ of $\mathbb{A}^{3}={\rm Spec}\left(\mathbb{C}\left[x,z,t\right]\right)$
which preserves the hyperplane $\left\{ x=0\right\} $ and maps the
closed subscheme with defining ideal $\left(x^{n},r+xp_{2}\left(x\right)\right)$
isomorphically onto the one with defining ideal $\left(x^{n},r+xp_{1}\left(x\right)\right)$.
 Since such an automorphism stabilizes the hyperplane $\left\{ x=0\right\} $,
there exists $\lambda\in\mathbb{C}^{*}$ such that $\phi^{*}\left(x\right)=\lambda x$. Furthermore,
$\phi$ maps the curve $\Gamma_{2,3}=\left\{ x=z^{2}+t^{3}=0\right\} $
isomorphically onto itself.

So there exists $\mu\in\mathbb{C}^{*}$ such that
$\phi^{*}z\equiv\mu^{3}z\;\textrm{mod }x$ and
$\phi^{*}t\equiv\mu^{2}t\;\textrm{mod }x$. Therefore, by composing
$\phi$  with the automorphism
$\theta:\mathbb{A}^{3}\stackrel{\sim}{\rightarrow}\mathbb{A}^{3}$,
$\left(x,z,t\right)\mapsto\left(\lambda^{-1}x,\mu^{-3}z,\mu^{-2}t\right)$,
we get an automorphism $\psi$ of $\mathbb{A}^{3}$ such that
$\psi^{*}x=x$, $\psi^{*}z\equiv z\;\textrm{mod }x$,
$\psi^{*}t\equiv t\;\textrm{mod }x$ and which maps the closed
subscheme with defining ideal
$\left(x^{n},r+xp_{2}\left(x\right)\right)$ isomorphically onto
the one with defining ideal $\left(x^{n},r+\mu^{6}\lambda
xp_{1}\left(\lambda x\right)\right)$. Letting
$\varepsilon=\mu^{6}\lambda$, this implies
$p_{2}\left(x\right)\equiv\varepsilon p_{1}\left(\lambda
x\right)\;\textrm{mod }x^{n-1}$ by virtue of Lemma
\ref{lem:Equ-Crit} below.

Conversely, if $p_{2}\left(x\right)\equiv\varepsilon
p_{1}\left(\lambda x\right)\;\textrm{mod }x^{n-1}$, we let
$\mu\in\mathbb{C}^*$ be such that $\varepsilon=\mu^{6}\lambda$.
Then the automorphism
$$\left(x,y,z,t\right)\mapsto\left(\lambda
x,\lambda^{-n}\mu^{-6}y+\lambda^{-n}x^{-n+1}\left(\mu^{-6}p_2(x)-\lambda
p_{1}\left(\lambda x\right)\right),\mu^{-3}z,\mu^{-2}t\right)$$of
$\mathbb{A}^{4}$ maps $V_{n,p_{2}}$ isomorphically onto
$V_{n,p_{1}}$. This proves (1).

To prove (2) and (3), it is enough to show that for every $p\in\mathbb{C}\left[x\right]$
such that $p\left(0\right)\neq0$, $V_{n,p}$ is biholomorphic to
$V_{n,1}$ and that these two threefolds have algebraically isomorphic
cylinders. The arguments are similar to arguments developed in \cite{MJ-P06}.

First, up to the composition by an isomorphism induced by an automorphism
of $\mathbb{A}^{4}$ of the form $\left(x,y,z,t\right)\mapsto\left(\lambda x,\lambda^{-n}y,z,t\right)$,
 we may assume that $p\left(0\right)=1$. Remark also that the ideals $(x^n,z^2+t^3+x)$ and $(x^n,p(x)(z^2+t^3+x))$ are equal. Therefore, by virtue of Lemma \ref{lem:Iso}, $V_{n,1}$ is isomorphic as an
algebraic variety to the variety $W_{n,p}$ defined by the equation $x^ny+p(x)(z^2+t^3+x)=0$.

Letting $f\in\mathbb{C}\left[x\right]$
be a polynomial such that $\exp\left(xf\left(x\right)\right)\equiv p\left(x\right)\;\textrm{mod }x^{n}$,
one checks that the biholomorphism $\psi$ of $\mathbb{A}^{3}$ defined
by \[
\Psi\left(x,z,t\right)=\left(x,y-\frac{\exp\left(xf\left(x\right)\right)-p(x)}{x^n}(z^2+t^3),\exp\left(\frac{1}{2}xf\left(x\right)\right)z,\exp\left(\frac{1}{3}xf\left(x\right)\right)t\right)\]
maps  $W_{n,p}$ onto $V_{n,p}$. So (3) follows.

For (2), we choose polynomials $g_{1}\in\mathbb{C}\left[x\right]$
such that $\exp\left(\frac{1}{2}xf\left(x\right)\right)\equiv
g_{1}\;\textrm{mod }x^{n}$ and $g_{2}\in\mathbb{C}\left[x\right]$
relatively prime to $g_{1}$ such that
$\exp\left(\frac{1}{3}xf\left(x\right)\right)\equiv
g_{2}\;\textrm{mod }x^{n}$. Since
$g_{1}\left(0\right)=g_{2}\left(0\right)=1$, the polynomials
$x^{n}g_{1}$, $x^{n}g_{2}$ and $g_{1}g_{2}$ generate the unit
ideal in $\mathbb{C}\left[x\right]$. So there exist polynomials
$h_{1},h_{2},h_{3}\in\mathbb{C}\left[x\right]$ such that
\[
\left(\begin{array}{ccc}
g_{1} & 0 & x^n\\
0 & g_{2} & x^n\\
h_1(x) & h_2(x) & h_{3}(x)\end{array}\right)\in{\rm GL}_{3}\left(\mathbb{C}\left[x\right]\right).\]

This matrix defines a $\mathbb{C}\left[x\right]$-automorphism of
$\mathbb{C}\left[x\right]\left[z,t,w\right]$ which maps the ideal
$\left(x^{n},r+xp\left(x\right)\right)$ of
$\mathbb{C}\left[x\right]\left[z,t,w\right]$ onto the one
$\left(x^{n},p(x)(r+x)\right)=\left(x^{n},r+x\right)$. Since these
ideals coincide with the centers of the affine modifications
$\sigma_{n,p}\times{\rm
id}:V_{n,p}\times\mathbb{A}^{1}\rightarrow\mathbb{A}^{4}$ and
$\sigma_{n,1}\times{\rm
id}:V_{n,1}\times\mathbb{A}^{1}\rightarrow\mathbb{A}^{4}$
respectively, we can conclude by Proposition 2.1 in \cite{KaZa99}
that the corresponding automorphism of $\mathbb{A}^{4}={\rm
Spec}\left(\mathbb{C}\left[x,z,t,w\right]\right)$ lifts to an
isomorphism between $V_{n,1}\times\mathbb{A}^{1}$ and
$V_{n,p}\times\mathbb{A}^{1}$. This completes the proof.
\end{proof}

\begin{lem}
\label{lem:Equ-Crit} Let $n\geq2$ and $p_{1},p_{2}\in\mathbb{C}\left[x\right]$
be polynomials of degree $\leq n-2$. If there exists a $\mathbb{C}\left[x\right]$-automorphism
$\Phi$ of $\mathbb{C}\left[x\right]\left[z,t\right]$ such that $\Phi\equiv{\rm id}\;\textrm{mod }x$
and $\Phi\left(x^{n},z^{2}+t^{3}+xp_{1}\right)=\left(x^{n},z^{2}+t^{3}+xp_{2}\right)$
then $p_{1}=p_{2}$. \end{lem}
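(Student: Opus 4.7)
Write $\Phi(z)=z+xA$ and $\Phi(t)=t+xB$ with $A,B\in R=\mathbb{C}[x,z,t]$. The constraint that $\Phi$ is a $\mathbb{C}[x]$-automorphism of $R$ forces its Jacobian determinant with respect to $(z,t)$, a unit in $R^{*}=\mathbb{C}^{*}$ which is $\equiv 1 \bmod x$, to equal $1$; this yields
\[
A_z+B_t+x(A_zB_t-A_tB_z)=0 \qquad (J)
\]
in $R$, giving one polynomial identity in $\mathbb{C}[z,t]$ at each order of $x$. The ideal hypothesis combined with $\Phi\equiv\mathrm{id}\bmod x$ lets one write $\Phi(f_1)=x^n\alpha+(1+xu')f_2$ for some $\alpha,u'\in R$ (the $f_2$-coefficient is $\equiv 1 \bmod x$ because $z^2+t^3$ is a non-zero-divisor). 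Expanding $\Phi(f_1)-f_2=x(p_1-p_2)+2xzA+x^2A^2+3xt^2B+3x^2tB^2+x^3B^3$ and dividing by $x$ gives the central identity
\[
(p_1-p_2)+2zA+3t^2B+x(A^2+3tB^2)+x^2B^3 = x^{n-1}\alpha+u'f_2. \qquad (\star)
\]

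My plan is then to prove $p_1=p_2$ by induction on $k=0,1,\dots,n-2$, showing at step $k$ that $p_{1,k}=p_{2,k}$ and that the scalar $\gamma_k:=c_k(0,0)$ vanishes, where $u'=\sum_k c_k(z,t) x^k$. The $x^k$-coefficient of $(\star)$ is a polynomial identity $E_k=0$ in $\mathbb{C}[z,t]$. Projecting $E_k$ onto $\mathbb{C}[z,t]/(z,t^2)=\mathbb{C}[t]/(t^2)$ --- the Jacobian ideal of $F=z^2+t^3$ contains $F$ itself, so kills $2zA_k$, $3t^2B_k$ and $c_kF$ --- and extracting the $t^0$-component expresses $p_{1,k}-p_{2,k}$ as a polynomial in the data $\{A_i(0,0),B_i(0,0),\gamma_i : i<k\}$; the inductive hypothesis, together with the fact that $A_0(0,0)=B_0(0,0)=0$ (automatic from the Koszul-type parametrization $A_0=\tfrac{\gamma}{2}z+3st^2$, $B_0=\tfrac{\gamma}{3}t-2sz$, $c_0=\gamma$ of the solutions of $2zA_0+3t^2B_0=c_0F$, given by the two syzygies $(3t^2,-2z,0)$ and $(z/2,t/3,1)$ of $(2z,3t^2,-F)$), collapses this to $p_{1,k}=p_{2,k}$.

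To propagate the induction I would establish $\gamma_k=0$ from the Jacobian identity $(J)$ at order $x^k$ evaluated at $(z,t)=(0,0)$. Parametrizing $(A_k,B_k,\tilde c_k)$ by the same two syzygies shows that $\tilde c_k$ contributes exactly $\tfrac{5}{6}\gamma_k$ to $(A_{k,z}+B_{k,t})(0,0)$; the remaining ``background'' contribution from the particular solution must be annihilated by the Jacobian relations at strictly lower orders. For $k=0$ this gives $\tfrac{5}{6}\gamma_0=0$ outright. For $k=1$, for instance, the background reduces to $\tfrac{p_{2,0}}{6}(3\gamma_{20}+2\gamma_{03})$, which vanishes because extracting the $z^2$ and $t^3$ coefficients of $(J)|_{x=0}$ yields $\gamma_{20}=\tfrac{12}{11}s_{11}$ and $\gamma_{03}=-\tfrac{18}{11}s_{11}$. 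The main obstacle is verifying this cancellation at every order $k$: it is a bookkeeping exercise controlled by the weighted-homogeneity of $F$ under $\deg z=3$, $\deg t=2$, but each individual step requires careful tracking of which lower-order Jacobian coefficients feed into the background contribution to $\gamma_k$.
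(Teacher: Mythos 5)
Your setup is sound and your treatment of the orders $k=0$ and $k=1$ is correct (and close in spirit to the paper's: both arguments live on the same two facts, namely the Jacobian condition on $\Phi$ and the structure of the relation $2zA+3t^2B=cr$ coming from the quasi-homogeneity of the cusp $r=z^2+t^3$). But the inductive step is not closed, in two places. First, the collapse of $Q_k(0,0)$ requires the vanishing at the origin not only of $A_0,B_0$ but of \emph{all} lower coefficients $A_i,B_i$ (for $k\geq 3$ the term $\sum_{i+j=k-1}A_i(0,0)A_j(0,0)$ contains, e.g., $A_1(0,0)^2$); for $i\geq1$ the $A_i$ solve an \emph{inhomogeneous} equation $2zA_i+3t^2B_i-c_ir=g_i$, so their values at the origin are governed by the linear part of $g_i$ and are not zero for free --- this vanishing is neither part of your inductive hypothesis nor established. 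Second, the cancellation of the ``background'' contribution to $(A_{k,z}+B_{k,t})(0,0)$, which you need to get $\gamma_k=0$, is only verified for $k=0,1$ and is exactly the point you yourself flag as the main obstacle. As it stands the combinatorics of the cross-terms grows with $k$ and there is no mechanism in your argument that controls it uniformly.

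The paper avoids this explosion by a normalization trick rather than raw bookkeeping. It inducts on the contact order $n_0$ of $\Phi$ with the identity. At each stage the Jacobian condition forces the leading deviation to be Hamiltonian, $\Phi(z)\equiv z+x^{n_0}\partial_t h$, $\Phi(t)\equiv t-x^{n_0}\partial_z h \bmod x^{n_0+1}$, and comparison with the ideal condition gives $a_{1,n_0-1}=a_{2,n_0-1}$ together with ${\rm Jac}(r,h)\in r\mathbb{C}[z,t]$; the single algebraic input is that this last condition forces $h=\gamma r+c$. One then \emph{corrects} $\Phi$ by composing with (a genuine polynomial automorphism, obtained via van den Essen--Maubach--V\'en\'ereau, agreeing mod $x^n$ with) $\exp\bigl(x^{n_0}{\rm Jac}(\cdot,\gamma(r+xp_1))\bigr)$, which preserves the ideal $(x^n,r+xp_1)$ because $r+xp_1$ lies in the kernel of the derivation, and which cancels the leading deviation. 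The corrected automorphism agrees with the identity to one higher order, and the induction restarts with no memory of lower-order data. I would suggest you adopt this correction step: it replaces both of your unproven claims by the one statement ${\rm Jac}(r,h)\in(r)\Rightarrow h\in\mathbb{C}[z,t]\,r+\mathbb{C}$, which is where the specific geometry of the cusp actually enters.
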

\begin{proof}
We let $r=z^{2}+t^{3}$ and we let
$p_{i}=\sum_{k=0}^{n-2}a_{ik}x^{k}$, $i=1,2$. We let $n_{0}\geq1$
be the largest integer such that $\Phi\equiv{\rm id}\;\textrm{mod
}x^{n_{0}}$. If $n_{0}\geq n-1$ then we are done. Otherwise, there
exist $\alpha,\beta\in\mathbb{C}\left[x,z,t\right]$ such that
$\Phi\left(r+xp_{1}\right)=\left(1+x^{n_{0}}\alpha\right)\left(r+xp_{2}\right)+x^{n}\beta$.
Since the determinant of the Jacobian of $\Phi$ is a nonzero
constant, there exists $h\in\mathbb{C}\left[z,t\right]$ such that
\[
\begin{cases}
\Phi\left(z\right) & \equiv z+x^{n_{0}}\partial_{t}h\;\textrm{mod }x^{n_{0}+1}\\
\Phi\left(t\right) & \equiv t-x^{n_{0}}\partial_{z}h\;\textrm{mod
}x^{n_{0}+1}.\end{cases}\] It follows that
$\Phi\left(r+xp_{1}\right)\equiv r+xp_{1}+x^{n_{0}}{\rm
Jac}\left(r,h\right)\;\textrm{mod }x^{n_{0}+1}$. By comparing with
the other expression for $\Phi\left(r+xp_{1}\right)$, we find that
$p_{1}\equiv p_{2}\;\textrm{mod }x^{n_{0}-1}$ and
$a_{1,n_{0}-1}+{\rm
Jac}\left(r,h\right)=\alpha\left(0,z,t\right)r+a_{2,n_{0}-1}$.
Since ${\rm
Jac}\left(r,h\right)\in\left(z,t\right)\mathbb{C}\left[z,t\right]$,
we obtain $a_{1,n_{0}-1}=a_{2,n_{0}-1}$ and ${\rm
Jac}\left(r,h\right)=\alpha\left(0,z,t\right)r$. Moreover, the
condition ${\rm Jac}\left(r,h\right)\in
r\mathbb{C}\left[z,t\right]$ implies that $h=\gamma(z,t)r+c$ for
some $\gamma\in\mathbb{C}\left[z,t\right]$ and $c\in\mathbb{C}$.
Now we consider the exponential
$\mathbb{C}\left[x\right]/\left(x^{n}\right)$-automorphism
$\exp\left(\delta\right)$ of
$\mathbb{C}\left[x\right]/\left(x^{n}\right)\left[z,t\right]$
associated with the Jacobian derivation \[ \delta=x^{n_{0}}{\rm
Jac}\left(\cdot,\gamma\left(z,t\right)\left(r+xp_{1}\right)\right).\]
Since the determinant of the Jacobian of $\exp\left(\delta\right)$
is equal to $1$ (see \cite{MJ09}), it follows from \cite{vE-M-V}
that there exists a $\mathbb{C}\left[x\right]$-automorphism
$\Theta$ of $\mathbb{C}\left[x\right]\left[z,t\right]$ such that
$\Theta\equiv\exp\left(\delta\right)\;\textrm{mod }x^{n}$. By
construction, $\Theta\equiv\Phi\;\textrm{mod }x^{n_{0}+1}$ and, 
since $r+xp_{1}\in{\rm Ker}\delta$, $\Theta$ preserves the ideal
$\left(x^{n},r+xp_{1}\right)$. It follows that
$\Psi=\Phi\circ\Theta^{-1}$ is a
$\mathbb{C}\left[x\right]$-automorphism of
$\mathbb{C}\left[x\right]\left[z,t\right]$ such that
$\Psi\left(x^{n},z^{2}+t^{3}+xp_{1}\right)=\left(x^{n},z^{2}+t^{3}+xp_{2}\right)$
and such that $\Psi\equiv{\rm id}\;\textrm{mod }x^{n_{0}+1}$. Now
the assertion follows by induction.
\end{proof}

\begin{rem}
  In the proofs above, the crucial point is to characterize the existence of isomorphisms between the centers  $Z_{n,p}$ of the affine modifications defining the threefolds $V_{n,p}$ that are induced by automorphisms of the ambient space $\mathbb{A}^3$. Note that for fixed integer $n\geq2$, these closed subschemes $Z_{n,p}$ with defining ideals $I_{n,p}=\left(x^{n},z^{2}+t^{3}+xp(x)\right)$ are all isomorphic as abstract schemes, and even as abstract infinitesimal deformations of the plane cubic $\Gamma_{2,3}=\left\{ z^{2}+t^{3}=0\right\}$ over ${\rm Spec}\left(\mathbb{C}\left[x\right]/\left(x^{n}\right)\right)$. Indeed, letting $g_1(x),\,g_2(x)\in \mathbb{C}\left[x\right]$ be polynomials such that $(g_1(x))^2\equiv p(x)\;\textrm{mod }x^n$ and $(g_2(x))^3\equiv p(x)\;\textrm{mod }x^n$, one checks for instance
that the automorphism $\xi$ of ${\rm Spec}\left(\mathbb{C}\left[x\right]/\left(x^{n}\right)\left[z,t\right]\right)$
defined by \[\xi\left(x,z,t\right)=\left(x,g_1(x)z,g_2(x)t\right)\]
induces an isomorphism between $Z_{n,p}$ and $Z_{n,1}$. However, Theorem \ref{main-thm} says in particular that no
isomorphism of this kind can be lifted to an automorphism of $\mathbb{A}^{3}={\rm Spec}\left(\mathbb{C}\left[x,z,t\right]\right)$. In other words, the $Z_{n,p}$'s can be considered as defining non-equivalent closed embeddings of $Z_{n,1}$ in $\mathbb{A}^3$. 
 \indent The above proof also gives counterexamples, in dimension four, to the
so-called stable equivalence problem (see \cite{MLRSY} and
\cite{MJ-P06}). Indeed, it implies that for each $n\geq2$ and
each $p(x)\in\mathbb{C}[x]$ with $p(0)\neq0$, the polynomials
$x^ny+z^2+t^3+xp(x)$ and $x^ny+p(x)(z^2+t^3+x)$ are equivalent by
an automorphism of $\mathbb{C}[x,y,z,t,w]$ whereas they are not
equivalent up to an automorphism of $\mathbb{C}[x,y,z,t]$. Their
zero-sets are even non isomorphic smooth affine threefolds. 
\end{rem}

As a very particular case of the above discussion, we obtain the result
announced in the introduction, namely:
\begin{cor}
The smooth contractible affine threefolds $X_{a}=\left\{ x^{4}y+z^{2}+t^{3}+x+x^{2}+ax^{3}=0\right\} $,
$a\in\mathbb{C}$, are pairwise non isomorphic. However, their cylinders
$X_{a}\times\mathbb{A}^{1}$, $a\in\mathbb{C}$, are all isomorphic.
\end{cor}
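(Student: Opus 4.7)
The plan is to apply Theorem \ref{main-thm} directly to the family $p_a(x)=1+x+ax^2$, which is a polynomial of degree $\leq n-2=2$ with $p_a(0)=1\neq 0$. So the threefolds $X_a$ are precisely the members $V_{4,p_a}$ of the family studied above, hence are smooth and contractible by the discussion preceding the theorem (via the affine modification interpretation and Theorem 3.1 of \cite{KaZa99}).

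For the non-isomorphism assertion, I would invoke Theorem \ref{main-thm}(1): $X_{a_1}\simeq X_{a_2}$ if and only if there exist $\lambda,\varepsilon\in\mathbb{C}^*$ such that
\[
1+x+a_2x^2\equiv \varepsilon\bigl(1+\lambda x+a_1\lambda^2 x^2\bigr)\pmod{x^{3}}.
\]
Comparing the coefficient of $1$ forces $\varepsilon=1$; comparing the coefficient of $x$ then forces $\lambda=1$; and comparing the coefficient of $x^2$ forces $a_1=a_2$. Hence the $X_a$ are pairwise non-isomorphic.

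For the cylinder assertion, I would simply quote Theorem \ref{main-thm}(2), which gives that the cylinders $V_{4,p}\times\mathbb{A}^1$ are all isomorphic for every admissible $p$, and in particular for $p=p_a$ as $a$ varies over $\mathbb{C}$.

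There is no real obstacle here: the whole content of the corollary is already packaged in Theorem \ref{main-thm}, and the only verification required is the elementary coefficient matching above, which is specifically designed so that the three parameters $(1,1,a)$ of $p_a$ use all the freedom in $\lambda$ and $\varepsilon$ and still leave $a$ as an invariant modulo $x^{n-1}=x^{3}$.
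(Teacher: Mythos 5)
Your proposal is correct and is exactly the argument the paper intends: the corollary is stated as an immediate special case of Theorem \ref{main-thm} with $n=4$ and $p_a(x)=1+x+ax^{2}$, and your coefficient matching modulo $x^{3}$ (forcing $\varepsilon=1$, then $\lambda=1$, then $a_1=a_2$) is the only verification needed. Nothing is missing.
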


\section{A geometric interpretation }

Here we give a geometric interpretation of the existence of an isomorphism
between the cylinders over the varieties $X_{a}$, $a\in\mathbb{C}$,
in terms of a variant of the famous Danielewski fiber product trick
\cite{Dan89}. Of course, the construction below can be adapted to
cover the general case, but we find it more enlightening to only consider
the particular case of the varities $X_{0}$ and $X_{1}$ in $\mathbb{A}^{4}={\rm Spec}\left(\mathbb{C}\left[x,y,z,t\right]\right)$
defined respectively by the equations \[
x^{4}y+z^{2}+t^{3}+x+x^{2}=0\quad\textrm{and}\quad x^{4}y+z^{2}+t^{3}+x+x^{2}+x^{3}=0.\]
For our purpose, it is convenient to use the fact that $X_{0}$ and
$X_{1}$ are isomorphic to the varieties $X$ and $Y$ in $\mathbb{A}^{4}$
defined respectively by the equations \[
x^{4}z=y^{2}+x+x^{2}-t^{3}\quad\textrm{and}\quad x^{4}z=\left(1+\alpha x^{2}\right)y^{2}+x+x^{2}-t^{3},\]
where $\alpha=-\frac{5}{3}$ . Clearly, the first isomorphism is simply
induced by the coordinate change $\left(x,y,z,t\right)\mapsto\left(x,z,-y,-t\right)$.
For the second one, one checks first that for $\beta=-1/3$, the following matrix in
${\rm GL}_{2}\left(\mathbb{C}\left[x\right]\right)$ \[
\left(\begin{array}{cc}
1-\beta x^{2}+\frac{1}{2}\beta^{2}x^{4} & \frac{1}{2}\beta^{2}x^{4}\\
\frac{1}{2}\beta^{2}x^{4} & 1+\beta x^{2}+\frac{1}{2}\beta^{2}x^{4}\end{array}\right)\]
defines a $\mathbb{C}\left[x\right]$-automorphism of
 $\mathbb{C}\left[x\right]\left[z,t\right]$ which maps the ideal 
$\left(x^{4},\left(1+\alpha x^{2}\right)z^{2}+x+x^{2}+t^{3}\right)$
onto the one $\left(x^{4},z^{2}+x+x^{2}+x^{3}+t^{3}\right)$. By virtue
of Lemma \ref{lem:Iso}, the corresponding automorphism of $\mathbb{A}^{3}$
lifts to an isomorphism between $X_{1}$ and the subvariety of $\mathbb{A}^{4}$
defined by the equation $x^{4}y+\left(1+\alpha x^{2}\right)z^{2}+x+x^{2}+t^{3}=0$,
and so, we eventually get the desired isomorphism by composing with
the previous coordinate change.

\begin{enavant} Now, the principle is the following: we  observe
that both $X$ and $Y$ come equipped with  $\mathbb{G}_{a}$-actions
induced by the ones on $\mathbb{A}^{4}$ associated respectively with
the locally nilpotent derivations $x^{4}\partial_{y}+2y\partial_{z}$
and $x^{4}\partial_{y}+2\left(1+\alpha x^{2}\right)y\partial_{z}$ of $\mathbb{C}\left[x,y,z,t\right]$.
The latter restrict to free actions on the open subsets $X^{*}=X\setminus\left\{ x=t=0\right\} $
and $Y^{*}=Y\setminus\left\{ x=t=0\right\} $ of $X$ and $Y$ respectively,
and so, they admit quotients $X^{*}\rightarrow X^{*}/\mathbb{G}_{a}$
and $Y^{*}\rightarrow Y^{*}/\mathbb{G}_{a}$ in the form of \'etale
locally trivial $\mathbb{G}_{a}$-bundles over suitable algebraic
spaces. We first check that $X^{*}/\mathbb{G}_{a}$ and $Y^{*}/\mathbb{G}_{a}$
are isomorphic to a same algebraic space $\mathfrak{S}$. This implies
that the fiber product $W=X^{*}\times_{\mathfrak{S}}Y^{*}$ has the
structure of a locally trivial $\mathbb{G}_{a}$-bundle over both
$X^{*}$ and $Y^{*}$ via the first and the second projection respectively.
Since $X^{*}$ and $Y^{*}$ are both strictly quasi-affine, there
is no guarantee a priori that these $\mathbb{G}_{a}$-bundles are
trivial. But we check below that it is indeed the case. Therefore,
since $X$ and $Y$ are affine, normal, and $X\setminus X^{*}$ and $Y\setminus Y^{*}$
have codimension $2$ in $X$ and $Y$ respectively, the corresponding
isomorphism $X^{*}\times\mathbb{A}^{1}\simeq W\simeq Y^{*}\times\mathbb{A}^{1}$
extends to a one $X\times\mathbb{A}^{1}\simeq Y\times\mathbb{A}^{1}$
.

\end{enavant}

\begin{enavant} \label{Par-Quot-Space} Let us check first that the
quotient spaces $X^{*}/\mathbb{G}_{a}$ and $Y^{*}/\mathbb{G}_{a}$
are indeed isomorphic. The restriction of the projection ${\rm pr}_{x,t}:\mathbb{A}^{4}\rightarrow\mathbb{A}^{2}={\rm Spec}\left(\mathbb{C}\left[x,t\right]\right)$
to $X^{*}$ and $Y^{*}$ induces $\mathbb{G}_{a}$-invariant morphisms
$\alpha:X^{*}\rightarrow\mathbb{A}^{2}\setminus\left\{ \left(0,0\right)\right\} $
and $\beta:Y^{*}\rightarrow\mathbb{A}^{2}\setminus\left\{ 0,0\right\}$. The latter restrict to trivial $\mathbb{G}_{a}$-bundles over $\mathbb{A}^{2}\setminus\left\{ x=0\right\} $. In contrast, the fiber
of each morphism over a closed point $\left(0,t\right)\in\mathbb{A}^{2}\setminus\left\{ \left(0,0\right)\right\} $
consists of the disjoint union of two affine lines $\left\{ x=0,y=\pm\mu\right\} $
where $\mu$ is a square root of $t^3$ whereas the fiber over the
non closed point $\left(x\right)\in\mathbb{C}\left[x,t\right]$ with
residue field $\mathbb{C}\left(t\right)$ corresponding to the punctured
line $\left\{ x=0\right\} \subset\mathbb{A}^{2}\setminus\left\{ \left(0,0\right)\right\}$ is
isomorphic to the affine line over the degree $2$ Galois extension
$\mathbb{C}\left(t\right)\left[y\right]/\left(y^{2}-t^{3}\right)$
of $\mathbb{C}\left(t\right)$. This indicates that the quotient spaces
$X^{*}/\mathbb{G}_{a}$ and $Y^{*}/\mathbb{G}_{a}$ should be obtained
from $\mathbb{A}^{2}\setminus\left\{ \left(0,0\right)\right\} $ by
replacing the punctured line $\left\{ x=0\right\} $ by a nontrivial
double \'etale covering of itself. An algebraic space $\mathfrak{S}$
with this property can be constructed in two steps as follows : first
we let $U_{\lambda}={\rm Spec}\left(\mathbb{C}\left[x,\lambda^{\pm1}\right]\right)$,
$U_{\lambda\lambda}={\rm Spec}\left(\mathbb{C}\left[x^{\pm1},\lambda^{\pm1}\right]\right)$
and we define an algebraic space $\mathfrak{S}_{\lambda}$ as the
quotient of $U_{\lambda}$ by the following \'etale equivalence relation

\[
\left(s,t\right):R_{\lambda}=U_{\lambda}\sqcup U_{\lambda\lambda}\longrightarrow U_{\lambda}\times U_{\lambda},\;\begin{cases}
U_{\lambda}\ni\left(x,\lambda\right) & \mapsto\left(\left(x,\lambda\right),\left(x,\lambda\right)\right)\\
U_{\lambda\lambda}\ni\left(x,\lambda\right) & \mapsto\left(\left(x,\lambda\right),\left(x,-\lambda\right)\right).\end{cases}\]
By construction, the $R_{\lambda}$-invariant morphism $U_{\lambda}\rightarrow{\rm Spec}\left(\mathbb{C}\left[x,t^{\pm1}\right]\right)$,
$\left(x,\lambda\right)\mapsto\left(x,\lambda^{2}\right)$ descends
to a morphism $\mathfrak{S}_{\lambda}\rightarrow{\rm Spec}\left(\mathbb{C}\left[x,t^{\pm1}\right]\right)$
restricting to an isomorphism over ${\rm Spec}\left(\mathbb{C}\left[x^{\pm1},t^{\pm1}\right]\right)$.
The fiber over the punctured line $\left\{ x=0\right\} $ is isomorphic
to ${\rm Spec}\left(\mathbb{C}\left(t\right)\left[\lambda\right]/\left(\lambda^{2}-t^{3}\right)\right)$.
Now we let $\mathfrak{S}$ be the algebraic space obtained by gluing $\mathfrak{S}_{\lambda}$ and
$U_{x}={\rm Spec}\left(\mathbb{C}\left[x^{\pm1},t\right]\right)$
by the identity on ${\rm Spec}\left(\mathbb{C}\left[x^{\pm1},t^{\pm1}\right]\right)$.
By construction, $\mathfrak{S}$ comes equipped with an \'etale cover
$p:V\rightarrow\mathfrak{S}$ by the scheme $V=U_{x}\sqcup U_{\lambda}$.
We let $U_{x,\lambda}=U_{x}\times_{\mathfrak{S}}U_{\lambda}\simeq{\rm Spec}\left(\mathbb{C}\left[x^{\pm1},\lambda^{\pm1}\right]\right)$.

\end{enavant}
\begin{lem}
\label{lem:Iso-Quot-Spaces} The quotients spaces $X^{*}/\mathbb{G}_{a}$
and $Y^{*}/\mathbb{G}_{a}$ are both isomorphic to $\mathfrak{S}$. \end{lem}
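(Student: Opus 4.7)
The strategy is to construct $\mathbb{G}_{a}$-invariant morphisms $q_{X}\colon X^{*}\to\mathfrak{S}$ and $q_{Y}\colon Y^{*}\to\mathfrak{S}$ and to show that each is an \'etale-locally trivial $\mathbb{G}_{a}$-bundle. The identifications $X^{*}/\mathbb{G}_{a}\simeq\mathfrak{S}\simeq Y^{*}/\mathbb{G}_{a}$ will then follow from the universal property of such quotients in the category of algebraic spaces.

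I would treat $X^{*}$ by splitting it according to the two charts of the \'etale cover $p\colon V=U_{x}\sqcup U_{\lambda}\to\mathfrak{S}$. Over $U_{x}$, the preimage $X^{*}\cap\{x\neq0\}$ is isomorphic to ${\rm Spec}(\mathbb{C}[x^{\pm1},y,t])$ via the projection forgetting $z$, since inverting $x$ lets one solve the defining equation of $X$ for $z$ uniquely. On this open, the derivation $D=x^{4}\partial_{y}+2y\partial_{z}$ becomes the pure translation $x^{4}\partial_{y}$, whose ring of invariants is $\mathbb{C}[x^{\pm1},t]$. The morphism $(x,y,z,t)\mapsto(x,t)$ therefore realizes $X^{*}\cap\{x\neq0\}\to U_{x}$ as a trivial $\mathbb{G}_{a}$-bundle.

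For the chart $U_{\lambda}$, which must cover a neighborhood of $\{x=0\}\subset X^{*}$, I would form the \'etale base change of $X^{*}$ by the double cover $\lambda^{2}=t^{3}$ of the punctured line $\{x=0\}\cap\{t\neq0\}$, thickened to an \'etale neighborhood in $\mathbb{A}^{2}\setminus\{(0,0)\}$. By the fiber analysis in paragraph \ref{Par-Quot-Space}, this separates the two branches $y=\pm\sqrt{t^{3}}$ of the fiber over $\{x=0\}$. Using the identity $y^{2}-x^{4}z=t^{3}-x-x^{2}$ valid on $X$, I would promote the restriction of $y$ to the chosen branch to a $\mathbb{G}_{a}$-invariant regular function $\lambda$ on the whole base change by a Hensel-type lift; the resulting morphism $(x,y,z,t)\mapsto(x,\lambda)$ realizes the base change as a trivial $\mathbb{G}_{a}$-bundle over the relevant open in $U_{\lambda}$. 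It then remains to verify compatibility of the two local morphisms with the equivalence relation $R_{\lambda}$ on the overlap $U_{x,\lambda}\simeq{\rm Spec}(\mathbb{C}[x^{\pm1},\lambda^{\pm1}])$, so that they descend to a single morphism $q_{X}\colon X^{*}\to\mathfrak{S}$. Combined with the \'etale-local triviality just established, $q_{X}$ is an \'etale-locally trivial $\mathbb{G}_{a}$-bundle and hence the geometric quotient. For $Y^{*}$, the derivation $D'=x^{4}\partial_{y}+2(1+\alpha x^{2})y\partial_{z}$ differs from $D$ only by the unit $1+\alpha x^{2}$, which reduces to $1$ along $\{x=0\}$. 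Thus the $U_{x}$-chart construction is verbatim, and the $U_{\lambda}$-chart is obtained by the analogous Hensel-lift using the relation $(1+\alpha x^{2})y^{2}-x^{4}z=t^{3}-x-x^{2}$; the double-cover structure $\lambda^{2}=t^{3}$ along $\{x=0\}$ is unchanged.

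The main obstacle is the explicit construction of the invariant $\lambda$ on the \'etale base change near $\{x=0\}$. Since $D(y)=x^{4}\neq0$, the function $y$ is not itself $\mathbb{G}_{a}$-invariant on $X^{*}$, so one must promote its restriction to the chosen branch of $\{x=0\}$ to a regular $\mathbb{G}_{a}$-invariant function globally on the base change, and then check that the resulting invariant is compatible with the equivalence relation $R_{\lambda}$ on the overlap. This compatibility is precisely what encodes the nontriviality of $\mathfrak{S}$ as an algebraic space and is the content of the lemma.
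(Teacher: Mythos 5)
Your overall strategy is the same as the paper's: trivialize over $U_{x}$ on the locus $\{x\neq0\}$, pass to the double cover of the punctured $t$-line near $\{x=0\}$ to separate the two branches $y=\pm\sqrt{t^{3}}$, and identify the resulting \'etale equivalence relation on $U_{\lambda}$ with $R_{\lambda}$. The $U_{x}$-chart is handled exactly as in the paper. On the $U_{\lambda}$-chart, however, your description of the key construction is off target, and the two substantive verifications are announced rather than carried out. What must be produced there is not a $\mathbb{G}_{a}$-invariant function extending $y$ (the invariants of $x^{4}\partial_{y}+2y\partial_{z}$ already contain $x$ and $t$, hence $\lambda$ after base change); it is a regular \emph{fiber} coordinate. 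Concretely, the Hensel lift you allude to is the construction of $\sigma\in\mathbb{C}[x,\lambda^{\pm1}]$ with $\sigma(0,\lambda)=\lambda^{3}$ and $\sigma^{2}\equiv\lambda^{6}-x-x^{2}\pmod{x^{4}}$, so that on the pulled-back variety one has $(y-\sigma)(y+\sigma)=x^{4}z'$ and $v=x^{-4}(y-\sigma)=z'/(y+\sigma)$ becomes a translation coordinate. Even then the trivialization exists only after deleting the other branch $\{x=y+\sigma=0\}$: the full base change $V_{\lambda}$ is \emph{not} a trivial bundle over $U_{\lambda}$, and one must check that the remaining open set $V_{\lambda+}$ still maps onto $V_{t}$ \'etale and surjectively. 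None of this appears in your sketch.

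More seriously, the compatibility you defer to the end is both mislocated and is in fact the heart of the proof. The overlap $U_{x,\lambda}$ between the two charts is the easy part (over $\{x\neq0\}$ everything is a trivial bundle over an affine scheme). The genuinely nontrivial point is the \emph{self}-overlap of the \'etale chart: one must show that the equivalence relation $V_{\lambda+}\times_{V_{t}}V_{\lambda+}\rightrightarrows V_{\lambda+}$ descends to precisely $R_{\lambda}=U_{\lambda}\sqcup U_{\lambda\lambda}\rightrightarrows U_{\lambda}$, which the paper does by computing $V_{\lambda+}\times_{V_{t}}V_{\lambda+}\simeq R_{\lambda}\times\mathbb{A}^{1}$ (using that $x$ and $\sigma$ generate the unit ideal in $\mathbb{C}[x,\lambda^{\pm1}]$) and then invoking the cartesian-square criterion I.5.8 of \cite{Kn} to conclude $V_{t}/\mathbb{G}_{a}\simeq\mathfrak{S}_{\lambda}$. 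Without this computation the quotient is not identified with $\mathfrak{S}$: a priori it could be a different algebraic space with the same fibers over $\mathbb{A}^{2}\setminus\{(0,0)\}$. The same computation (with $\tau=(1-\frac{1}{2}\alpha x^{2})\sigma$ in place of $\sigma$) is needed for $Y^{*}$. So the proposal is a correct road map, but the lemma's actual content is in the steps it leaves blank.
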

\begin{proof}
The argument is very similar to the one used in \cite{Dub09}.

1) The case of $X^{*}$. This quasi-affine threefold is covered by
two $\mathbb{G}_{a}$-invariant open subsets \[
V_{x}=X^{*}\setminus\left\{ x=0\right\} =X\setminus\left\{ x=0\right\} \qquad\textrm{and}\qquad V_{t}=X^{*}\setminus\left\{ t=0\right\} =X\setminus\left\{ t=0\right\} \]
Letting $U_{x}={\rm Spec}\left(\mathbb{C}\left[x^{\pm1},t\right]\right)$,
one checks easily that the morphism \[
U_{x}\times\mathbb{G}_{a}\longrightarrow V_{x},\:\left(x,t,v\right)\mapsto\left(x,x^{4}v,x^{4}v^{2}+x^{-4}\left(-t^{3}+x+x^{2}\right),t\right)\]
is an isomorphism, equivariant for the $\mathbb{G}_{a}$-action on
$U_{x}\times\mathbb{G}_{a}$ by translations on the second factor,
which yields a trivialization of the induced $\mathbb{G}_{a}$-action
on $V_{x}$. In contrast, the induced action on $V_{t}$ is not trivial.
However, letting $U_{\lambda}={\rm Spec}\left(\mathbb{C}\left[x,\lambda^{\pm1}\right]\right),$
we claim that there exists $\sigma,\xi\in\mathbb{C}\left[x,\lambda^{\pm1}\right]$
such that the morphism \[
U_{\lambda}\times\mathbb{G}_{a}\longrightarrow V_{t},\;\left(x,\lambda,v\right)\mapsto\left(x,x^{4}v+\sigma,\left(x^{4}v+2\sigma\right)v+\xi,\lambda^{2}\right)\]
is \'etale and equivariant for the $\mathbb{G}_{a}$-action on $U_{\lambda}\times\mathbb{G}_{a}$
by translations on the second factor, whence defines an \'etale trivialization
on the induced action on $V_{t}$. This can be seen as follows : let
\[
V_{\lambda}=V_{t}\times_{\mathbb{A}_{*}^{1}}\mathbb{A}_{*}^{1}\simeq{\rm Spec}\left(\mathbb{C}\left[x,y,z,\lambda^{\pm1}\right]/\left(x^{4}z-y^{2}+\lambda^{6}-x-x^{2}\right)\right)\]
be the pull-back of $V_{t}$ by the Galois covering \[
\varphi:\mathbb{A}_{*}^{1}={\rm Spec}\left(\mathbb{C}\left[\lambda^{\pm1}\right]\right)\rightarrow\mathbb{A}_{*}^{1}={\rm Spec}\left(\mathbb{C}\left[t^{\pm1}\right]\right),\;\lambda\mapsto t=\lambda^{2}.\]
Since $\lambda\in\mathbb{C}\left[x,\lambda^{\pm1}\right]$ is invertible
it follows that one can find $\sigma\in\mathbb{C}\left[x,\lambda^{\pm1}\right]$
with $\deg_{x}\sigma\leq3$ and $\sigma\left(0,\lambda\right)=\lambda^{3}$,
and $\xi\in\mathbb{C}\left[x,\lambda^{\pm1}\right]$ such that \[
y^{2}-\lambda^{6}+x+x^{2}=\left(y-\sigma\right)\left(y+\sigma\right)+x^{4}\xi.\]
Note that $\sigma$ and $\xi$, considered as Laurent polynomials
in the variable $\lambda$, are necessarily odd and even respectively.
This identity implies in turn that $V_{\lambda}$ is isomorphic to
the subvariety of ${\rm Spec}\left(\mathbb{C}\left[x,y,z',\lambda^{\pm1}\right]\right)$
defined by the equation $x^{4}z'=\left(y-\sigma\right)\left(y+\sigma\right)$,
where $z'=z-\xi$. The $\mathbb{G}_{a}$-action on $V_{t}$ lift to
the one on $V_{\lambda}$ induced by the locally nilpotent derivation
$x^{4}\partial_{y}+2y\partial_{z'}$. The open subset $V_{\lambda+}=V_{\lambda}\setminus\left\{ x=y+\sigma=0\right\} \simeq{\rm Spec}\left(\mathbb{C}\left[x,\lambda^{\pm1}\right]\left[v\right]\right),$
where $$v=x^{-4}\left(y-\sigma\right)\mid_{V_{\lambda+}}=\left(y-\sigma\right)^{-1}z'\mid_{V_{\lambda+}},$$ is equivariantly isomorphic
to $U_{\lambda}\times\mathbb{G}_{a}$ where $\mathbb{G}_{a}$ acts
on the second factor by translations, and the restriction of the \'etale
morphism ${\rm pr}_{1}:V_{t}\times_{\mathbb{A}_{*}^{1}}\mathbb{A}_{*}^{1}\rightarrow V_{t}$
to $V_{\lambda}\setminus\left\{ x=y+\sigma=0\right\} \simeq U_{\lambda}\times\mathbb{G}_{a}$
yields the expected \'etale trivialization. It follows from this
description that $X^{*}/\mathbb{G}_{a}$ is isomorphic to an algebraic
space obtained as the quotient of disjoint union of $U_{x}=V_{x}/\mathbb{G}_{a}$
and $U_{\lambda}=V_{\lambda+}/\mathbb{G}_{a}$ by a certain \'etale
equivalence relation. Clearly, the only nontrivial part is to check
that $V_{t}/\mathbb{G}_{a}$ is isomorphic to the algebraic space
$\mathfrak{S}_{\lambda}$ of \ref{Par-Quot-Space} above. In view
of I.5.8 in \cite{Kn} it is enough to show that we have a cartesian
square $$ \label{cart} \xymatrix{V_{\lambda +}\times_{V_t} V_{\lambda +} \ar@<0.5ex>[r]^-{{\rm pr}_1} \ar@<-0.5ex>[r]_-{{\rm pr}_2} \ar[d] & V_{\lambda +}=U_{\lambda}\times \mathbb{G}_a \ar[d]_{{\rm pr}_1} \\ R_{\lambda} \ar@<0.5ex>[r]^{s} \ar@<-0.5ex>[r]_{t} & U_{\lambda}.} $$

Letting $g\left(x,\lambda,v\right)=x^{4}v+\sigma\left(x,\lambda\right)\in\mathbb{C}\left[x,\lambda^{\pm1},v\right]$
and $h=\left(x^{4}v+2\sigma\left(x,\lambda\right)\right)v+\xi\left(x,\lambda\right)\in\mathbb{C}\left[x,\lambda^{\pm1},v\right]$,
$V_{\lambda+}\times_{V_{t}}V_{\lambda+}$ is isomorphic to the spectrum
of the ring \[
A=\mathbb{C}\left[x,\lambda^{\pm1},\lambda_{1}^{\pm1},v,v_{1}\right]/\left(g\left(x,\lambda,v\right)-g\left(x,\lambda_{1},v_{1}\right),h\left(x,\lambda,v\right)-h\left(x,\lambda_{1},v_{1}\right),\lambda^{2}-\lambda_{1}^{2}\right)\]
Since $\lambda$ is invertible and $\sigma\left(0,\lambda\right)=\lambda^{3}$,
$x$ and $\sigma$ generate the unit ideal in $\mathbb{C}\left[x,\lambda^{\pm1}\right]$.
It follows that $A$ decomposes as the direct product of the rings
\begin{eqnarray*}
A_{0} & = & \mathbb{C}\left[x,\lambda^{\pm1},v,v_{1}\right]/\left(g\left(x,\lambda,v\right)-g\left(x,\lambda,v_{1}\right),h\left(x,\lambda,v\right)-h\left(x,\lambda,v_{1}\right)\right)\\
 & \simeq & \mathbb{C}\left[x,\lambda^{\pm1},v,v_{1}\right]/\left(x^{4}\left(v-v_{1}\right),x^{4}\left(v^{2}-v_{1}^{2}\right)+2\sigma\left(x,\lambda\right)\left(v-v_{1}\right)\right)\\
 & \simeq & \mathbb{C}\left[x,\lambda^{\pm1},v,v_{1}\right]/\left(x^{4}\left(v-v_{1}\right),2\sigma\left(x,\lambda\right)\left(v-v_{1}\right)\right)\\
 & \simeq & \mathbb{C}\left[x,\lambda^{\pm1}\right]\left[v\right]\end{eqnarray*}
 and \begin{eqnarray*}
A_{1} & = & \mathbb{C}\left[x,\lambda^{\pm1},v,v_{1}\right]/\left(g\left(x,\lambda,v\right)-g\left(x,-\lambda,v_{1}\right),h\left(x,\lambda,v\right)-h\left(x,-\lambda,v_{1}\right)\right)\\
 & \simeq & \mathbb{C}\left[x,\lambda^{\pm1},v,v_{1}\right]/\left(x^{4}\left(v-v_{1}\right)+2\sigma\left(x,\lambda\right),x^{4}\left(v^{2}-v_{1}^{2}\right)+2\sigma\left(x,\lambda\right)\left(v+v_{1}\right)\right)\\
 & \simeq & \mathbb{C}\left[x,\lambda^{\pm1},v,v_{1}\right]/\left(x^{4}\left(v-v_{1}\right)+2\sigma\left(x,\lambda\right)\right)\\
 & \simeq & \mathbb{C}\left[x^{\pm1},\lambda^{\pm1},v,v_{1}\right]/\left(x^{4}\left(v-v_{1}\right)+2\sigma\left(x,\lambda\right)\right)\\
 & \simeq & \mathbb{C}\left[x^{\pm1},\lambda^{\pm1}\right]\left[v\right]\end{eqnarray*}
Thus $V_{\lambda,+}\times_{V_{t}}V_{\lambda+}$ is isomorphic to $R_{\lambda}\times\mathbb{A}^{1}$
and the above diagram is clearly cartesian. This completes the proof
for $X^{*}$.

2) The case of $Y^{*}$. Similarly as for the case of $X^{*}$, $Y^{*}$
is covered by two $\mathbb{G}_{a}$-invariant open subsets \[
W_{x}=Y^{*}\setminus\left\{ x=0\right\} =Y\setminus\left\{ x=0\right\} \qquad\textrm{and}\qquad W_{t}=Y^{*}\setminus\left\{ t=0\right\} =Y\setminus\left\{ t=0\right\} \]
and the morphism \[
U_{x}\times\mathbb{G}_{a}\longrightarrow W_{x},\:\left(x,t,v\right)\mapsto\left(x,x^{4}v,x^{4}\left(1+\alpha x^{2}\right)v^{2}+x^{-4}\left(-t^{3}+x+x^{2}\right),t\right)\]
defines a trivialization of the induced $\mathbb{G}_{a}$-action on
$W_{x}$. To obtain an \'etale trivialization of the $\mathbb{G}_{a}$-action
on $W_{t}$, one checks first that there exists $\zeta\in\mathbb{C}\left[x,\lambda^{\pm 1}\right]$ such that for 
\[\tau=\left(1-\frac{1}{2}\alpha x^{2}\right)\sigma\left(x,\lambda\right)\]
the identity \[\left(1+\alpha x^{2}\right)y^{2}-\lambda^{6}+x+x^{2}=\left(1+\alpha x^{2}\right)\left(y-\tau\right)\left(y+\tau\right)+x^{4}\zeta\left(x,\lambda\right)\]
holds in $\mathbb{C}\left[x,\lambda^{\pm1},y\right]$. Then one checks
in a similar way as above that the morphism \[
U_{\lambda}\times\mathbb{G}_{a}\longrightarrow W_{t},\;\left(x,\lambda,v\right)\mapsto\left(x,x^{4}v+\tau,\left(x^{4}v+2\tau\right)v+\zeta,\lambda^{2}\right)\]
yields an \'etale trivialization, that $W_{t}/\mathbb{G}_{a}\simeq U_{\lambda}/R_{\lambda}=\mathfrak{S}_{\lambda}$
and that $Y^{*}/\mathbb{G}_{a}\simeq\mathfrak{S}$.
\end{proof}
\begin{enavant} From now on, we identify $X^{*}/\mathbb{G}_{a}$
and $Y^{*}/\mathbb{G}_{a}$ with the algebraic space $\mathfrak{S}$
constructed in \ref{Par-Quot-Space} above, and we let $W=X^{*}\times_{\mathfrak{S}}Y^{*}$.
By construction, $W$ is a scheme, equipped with a structure of Zariski
locally trivial $\mathbb{G}_{a}$-bundle over $X^{*}$ and $Y^{*}$
via the first and the second projection respectively. The following
completes the proof.

\end{enavant}
\begin{lem}
We have isomorphisms $X^{*}\times\mathbb{A}^{1}\simeq W\simeq Y^{*}\times\mathbb{A}^{1}$. \end{lem}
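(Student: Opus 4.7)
The plan is to establish the isomorphism $W\simeq X^{*}\times\mathbb{A}^{1}$; the companion isomorphism $W\simeq Y^{*}\times\mathbb{A}^{1}$ follows symmetrically by interchanging the roles of $X$ and $Y$. Since $\mathrm{pr}_{1}:W\to X^{*}$ is a Zariski locally trivial $\mathbb{G}_{a}$-torsor, it suffices to exhibit a global regular section, or equivalently a morphism $s:X^{*}\to Y^{*}$ whose composition with $Y^{*}\to\mathfrak{S}$ equals the quotient map $X^{*}\to\mathfrak{S}$.

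Such a section will be built using the affine open cover $X^{*}=V_{x}\cup V_{t}$. Over $V_{x}$, both quotient maps to $\mathfrak{S}$ factor through the open subscheme $U_{x}=\mathrm{Spec}(\mathbb{C}[x^{\pm1},t])\subset\mathfrak{S}$ and coincide with $\mathrm{pr}_{x,t}$. Exploiting that the defining equations of $X$ and $Y$ differ only by the term $\alpha x^{2}y^{2}$, I expect the explicit formula
\[
s_{x}(x,y,z,t)=\bigl(x,\,y,\,z+\alpha x^{-2}y^{2},\,t\bigr)
\]
to give a regular section $V_{x}\to W$ landing in $Y\setminus\{x=0\}\subset Y^{*}$: a direct substitution in the equation of $Y$ cancels the correction term using $X$'s equation, and $(x,t)$ is preserved, so compatibility with the $\mathfrak{S}$-projection is automatic.

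Over $V_{t}$ the construction is less transparent because the quotient $V_{t}\to\mathfrak{S}_{\lambda}$ is only étale locally trivial. Since $V_{t}$ is smooth affine one has $H^{1}(V_{t},\mathcal{O}_{V_{t}})=0$, which forces the $\mathbb{G}_{a}$-torsor $W|_{V_{t}}\to V_{t}$ to be Zariski trivial. A section $s_{t}$ can be produced explicitly by pulling back to the étale double cover $V_{\lambda}\to V_{t}$ from the proof of Lemma~\ref{lem:Iso-Quot-Spaces}, and matching the trivializing parameters $\sigma(x,\lambda)$ on the $X$-side with $\tau(x,\lambda)=(1-\tfrac{1}{2}\alpha x^{2})\sigma(x,\lambda)$ on the $Y$-side; the shape of $\tau$ is arranged precisely so that descent to $V_{t}$ goes through.

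The principal obstacle is the final gluing. Writing $c=s_{x}-s_{t}\in\mathcal{O}(V_{x}\cap V_{t})$ for the fiberwise $\mathbb{G}_{a}$-difference, global triviality amounts to decomposing $c=f_{x}-f_{t}$ with $f_{x}\in\mathcal{O}(V_{x})$ and $f_{t}\in\mathcal{O}(V_{t})$, so that after translating $s_{x}$ by $-f_{x}$ and $s_{t}$ by $-f_{t}$ the two local sections agree on the overlap. I expect this \v{C}ech-coboundary verification to reduce to a finite polynomial identity controlled by the explicit forms of $\sigma,\tau,\xi,\zeta$ and the specific value $\alpha=-\tfrac{5}{3}$, essentially the Laurent expansion of the correction term $\alpha x^{-2}y^{2}$ across $V_{x}\cap V_{t}=X\setminus\{xt=0\}$. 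Once such a splitting is produced, the resulting matching local sections glue to a global section of $\mathrm{pr}_{1}$, proving $W\simeq X^{*}\times\mathbb{A}^{1}$ and, by symmetry, the full lemma.
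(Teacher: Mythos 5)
Your overall strategy is the same as the paper's: reduce to showing that the $\mathbb{G}_{a}$-torsor ${\rm pr}_{1}:W\rightarrow X^{*}$ is trivial, trivialize it over the two affine pieces $V_{x}$ and $V_{t}$, and check that the resulting \v{C}ech $1$-cocycle on the cover $\{V_{x},V_{t}\}$ is a coboundary. Your section $s_{x}(x,y,z,t)=(x,y,z+\alpha x^{-2}y^{2},t)$ over $V_{x}$ is correct, and the existence of some section $s_{t}$ over the affine open $V_{t}$ does follow from $H^{1}(V_{t},\mathcal{O}_{V_{t}})=0$. The problem is that everything after that point is the actual content of the lemma, and you leave it as an expectation. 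Since $X^{*}$ is only quasi-affine, $H^{1}(X^{*},\mathcal{O}_{X^{*}})$ has no reason to vanish --- the paper explicitly warns that there is no a priori guarantee of triviality --- so ``the cocycle splits'' is not a routine verification but the one nontrivial claim to be proved. You neither write down $s_{t}$, nor compute the difference $c=s_{x}-s_{t}$ on $V_{x}\cap V_{t}$, nor produce the splitting $c=f_{x}-f_{t}$.

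Moreover, your sketch misses the mechanism that makes the splitting work. In the paper the cocycle of ${\rm pr}_{1}:W\rightarrow X^{*}$ is $(x^{-4}\tau,2x^{-4}\tau)$, which is \emph{not} a coboundary for functions pulled back from $\mathfrak{S}$ (that would make $Y^{*}\rightarrow\mathfrak{S}$ itself trivial, which it is not, as $\mathfrak{S}$ is not a scheme while $Y^{*}$ is). The splitting functions $\beta_{x}=\beta_{\lambda}=-\bigl(1-\tfrac{1}{2}\alpha x^{2}\bigr)v$ depend essentially on the fiber coordinate $v$ of $X^{*}$ over $\mathfrak{S}$, which transforms by translation by $x^{-4}\sigma$; combined with the identity $\tau=\bigl(1-\tfrac{1}{2}\alpha x^{2}\bigr)\sigma$ this is exactly what produces the coboundary. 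Nothing in your outline points to using the torsor structure of $X^{*}$ itself in this way. Two further inaccuracies: the identity involved holds for every value of $\alpha$ (the choice $\alpha=-\tfrac{5}{3}$ was only needed earlier to identify $Y$ with $X_{1}$), so the ``finite polynomial identity controlled by $\alpha=-\tfrac{5}{3}$'' is not where the cancellation lives; and the second isomorphism is not literally obtained ``by symmetry,'' since $\sigma/\tau=\bigl(1-\tfrac{1}{2}\alpha x^{2}\bigr)^{-1}$ is not polynomial --- the paper instead uses $\sigma=\bigl(1+\tfrac{1}{2}\alpha x^{2}\bigr)\tau+\tfrac{1}{4}\alpha^{2}x^{4}\sigma$, where the last term is harmless because it is divisible by $x^{4}$. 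As it stands, your argument is a correct reduction plus an unproved assertion of the key identity.
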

\begin{proof}
We will show more precisely that the $\mathbb{G}_{a}$-bundles ${\rm pr}_{1}:W\rightarrow X^{*}$
and ${\rm pr}_{2}:W\rightarrow Y^{*}$ are both trivial. It follows
from the description of the \'etale trivialization given in the proof
of Lemma \ref{lem:Iso-Quot-Spaces} above that the isomorphy class
of the $\mathbb{G}_{a}$-bundle $X^{*}\rightarrow\mathfrak{S}$ in
$H_{\textrm{ét}}^{1}\left(\mathfrak{S},\mathcal{O}_{\mathfrak{S}}\right)$
is represented by the \v{C}ech $1$-cocycle \[
\left(x^{-4}\sigma,2x^{-4}\sigma\right)\in\Gamma\left(U_{x,\lambda},\mathcal{O}_{U_{x,\lambda}}\right)\times\Gamma\left(U_{\lambda\lambda},\mathcal{O}_{U_{\lambda\lambda}}\right)=\mathbb{C}\left[x^{\pm1},\lambda^{\pm1}\right]^{2}\]
with value in $\mathcal{O}_{\mathfrak{S}}$ for the \'etale cover
$p:V\rightarrow\mathfrak{S}$ of $\mathfrak{S}$. Similarly, the isomorphy
class of the $\mathbb{G}_{a}$-bundle $Y^{*}\rightarrow\mathfrak{S}$
is represented by the \v{C}ech $1$-cocycle \[
\left(x^{-4}\tau,2x^{-4}\tau\right)\in\Gamma\left(U_{x,\lambda},\mathcal{O}_{U_{x,\lambda}}\right)\times\Gamma\left(U_{\lambda\lambda},\mathcal{O}_{U_{\lambda\lambda}}\right)=\mathbb{C}\left[x^{\pm1},\lambda^{\pm1}\right]^{2}.\]
This implies in turn that the isomorphy class of the $\mathbb{G}_{a}$-bundle
${\rm pr}_{1}:W\rightarrow X^{*}$ in $H_{\textrm{ét}}^{1}\left(X^{*},\mathcal{O}_{X^{*}}\right)$
is represented by the \v{C}ech $1$-cocyle \[
\alpha=\left(x^{-4}\tau,2x^{-4}\tau\right)\in\Gamma\left(U_{x,\lambda}\times\mathbb{G}_{a},\mathcal{O}_{U_{x,\lambda}\times\mathbb{G}_{a}}\right)\times\Gamma\left(U_{\lambda\lambda}\times\mathbb{G}_{a},\mathcal{O}_{U_{\lambda\lambda}\times\mathbb{G}_{a}}\right)=\left(\mathbb{C}\left[x^{\pm1},\lambda^{\pm1}\right]\left[v\right]\right)^{2}\]
with value in $\mathcal{O}_{X^{*}}$ for \'etale cover given by $U_{x}\times\mathbb{G}_{a}$
and $U_{\lambda}\times\mathbb{G}_{a}$. By definition, ${\rm pr}_{1}:W\rightarrow X^{*}$
is a trivial $\mathbb{G}_{a}$-bundle if and only if $\alpha$ is
coboundary. This is the case if and only if there exists \[
\beta_{x}\in\Gamma\left(U_{x}\times\mathbb{G}_{a},\mathcal{O}_{U_{x}\times\mathbb{G}_{a}}\right)=\mathbb{C}\left[x^{\pm1},t\right]\left[v\right]\quad\textrm{and}\quad\beta_{\lambda}\in\Gamma\left(U_{\lambda}\times\mathbb{G}_{a},\mathcal{O}_{U_{\lambda}\times\mathbb{G}_{a}}\right)=\mathbb{C}\left[x,\lambda^{\pm1}\right]\left[v\right]\]
such that \[
\begin{cases}
x^{-4}\tau & =\beta_{\lambda}\left(x,\lambda,v-x^{-4}\sigma\right)-\beta_{x}\left(x,\lambda^{2},v\right)\\
2x^{-4}\tau & =\beta_{\lambda}\left(x,\lambda,v\right)-\beta_{\lambda}\left(x,-\lambda,v+2x^{-4}\sigma\right)\end{cases}\]
Since \[
\tau\left(x,\lambda\right)=\left(1-\frac{1}{2}\alpha x^{2}\right)\sigma\left(x,\lambda\right),\]
one can choose for instance \[
\beta_{x}\left(x,t,v\right)=-\left(1-\frac{1}{2}\alpha x^{2}\right)v\quad\textrm{and}\quad\beta_{\lambda}\left(x,\lambda,v\right)=-\left(1-\frac{1}{2}\alpha x^{2}\right)v.\]
The fact that  ${\rm pr}_{2}:W\rightarrow Y^{*}$
is also a trivial $\mathbb{G}_{a}$-bundle follows from a similar argument using the identity 
\[ \sigma\left(x,\lambda\right)=\left(1+\frac{1}{2}\alpha x^{2}\right)\tau\left(x,\lambda\right)+{\displaystyle \frac{1}{4}\alpha^2 x^{4}\sigma\left(x,\lambda\right)}.\] 
\end{proof}

\end{document}